\renewenvironment{abstract}
{\small\vspace{-1em}
\begin{center}
\bfseries\abstractname\vspace{-.5em}\vspace{0pt}
\end{center}
\list{}{
\setlength{\leftmargin}{0.6in}%
\setlength{\rightmargin}{\leftmargin}}%
\item\relax}
{\endlist}
\declaretheorem[name=Theorem, numberwithin=section]{theorem}
\declaretheorem[name=Lemma, sibling=theorem]{lemma}
\declaretheorem[name=Definition, sibling=theorem]{definition}
\declaretheorem[name=Corollary, sibling=theorem]{corollary}
\declaretheorem[name=Claim, sibling=theorem]{claim}
\def\cqedsymbol{\ifmmode$\lrcorner$\else{\unskip\nobreak\hfil
\penalty50\hskip1em\null\nobreak\hfil$\lrcorner$
\parfillskip=0pt\finalhyphendemerits=0\endgraf}\fi}
\DeclareMathOperator{\polylog}{polylog}
\def\Pr{\mathbb{P}}
\def\Ex{\mathbb{E}}
\newcommand{\NN}{\mathbb{N}} 
\tikzstyle{vertex}=[circle, draw, fill=black!50,
\let\leq\leqslant
\let\geq\geqslant
\newcommand{\qu}{\textsc{Query}}
\title{Distance reconstruction of sparse random graphs}
\author[1]{Paul Bastide}
\affil[1]{LaBRI - TU Delft, \href{mailto:paul.bastide@ens-rennes.fr}{paul.bastide@ens-rennes.fr}}
\date{\today}
\begin{document}
\maketitle
\begin{abstract}
    In the distance query model, we are given access to the vertex set of a $n$-vertex graph $G$, and an oracle that takes as input two vertices and returns the distance between these two vertices in $G$. We study how many queries are needed to reconstruct the edge set of $G$ when $G$ is sampled according to the $G(n,p)$ Erd\H{o}s-Renyi-Gilbert distribution.

    Our approach applies to a large spectrum of values for $p$ starting slightly above the connectivity threshold: $p \geq \frac{2000 \log n}{n}$. We show that there exists an algorithm that reconstructs $G \sim G(n,p)$ using $O( \Delta^2 n \log n )$ queries in expectation, where $\Delta$ is the expected average degree of $G$. In particular, for $p \in [\frac{2000 \log n}{n}, \frac{\log^2 n}{n}]$ the algorithm uses $O(n \log^5 n)$ queries. 
\end{abstract}

\section{Introduction}





The question of reconstructing a graph by querying a distance oracle is an extensively studied topic \cite{beerliova06,mathieu2013graph,RONG20221,mathieu2021simple,bastide2023optimal}. It has major applications in network discovery \cite{mathieu2013graph,bastide2023optimal,krivelevich2024reconstructing}, phylogenetics \cite{hein1989optimal,waterman1977additive,KingZhangZhou} and machine learning \cite{soudry2015efficient}. These kinds of problems arise naturally when confronted with the following type of questions: Given an unknown internet network, how fast can you reconstruct it only using a {\fontfamily{qcr}\selectfont ping} protocol? 
Where the {\fontfamily{qcr}\selectfont ping} protocol designates one of the most basic protocols available in an internet network, allowing you to test if another network node is available by sending a dummy message and receiving a dummy message back. In practice, it is possible to estimate the distance in the network by computing the delay of a {\fontfamily{qcr}\selectfont ping}, between the time the first message is sent and the time the second message is received. 

\paragraph{Distance reconstruction setup} Let $G=(V,E)$ be a connected $n$-vertex graph where $V$ is known but $E$ is hidden. We are given access to an oracle that takes $u,v \in V$ as input and answers $d_G(u,v)$ the distance between $u$ and $v$ in $G$. We study how many queries are needed to uniquely reconstruct the edge set $E$. This setting has been extensively studied both when the graph $G$ is assumed to be taken from a known graph class $\mathcal{G}$ \cite{mathieu2013graph,bastide2023optimal,rong21} and when $G$ is sampled randomly according to a known distribution  \cite{mathieu2021simple,krivelevich2024reconstructing}. The set of edges of a graph $G = (V,E)$ can be seen as $E = \{ \{u,v \} \in {V \choose 2} \mid d(u,v) = 1\}$. Therefore ${|V| \choose 2}$ is always an upper bound on the number of queries required to reconstruct a graph. Unfortunately, this upper bound cannot be improved for most classes containing graphs with a high maximum degree. For example, even when $\mathcal{G}$ is taken to be the class of all trees, Reyzin and Srivastava \cite{reyzin2007learning} showed that there is still a lower bound of $\Omega(n^2)$. However, when $\mathcal{G}$ is taken to be a class of graphs with bounded degree, \cite{mathieu2013graph} showed that there exists a randomized algorithm using only $O(n^{3/2})$ queries in expectation. Since then, most research has focused on bounded degree graph classes \cite{rong21,bastide2023optimal,mathieu2021simple}.

\paragraph{Reconstruction of random graphs} Mathieu and Zhou \cite{mathieu2021simple} initiated the study of the complexity of distance reconstruction on random graphs by showing that random $\Delta$-regular graphs (where $\Delta$ is a constant) can be reconstructed using only $O(n\log^2 n)$ queries in expectation. The algorithm they used is simple and natural, but the complexity analysis requires a profound understanding of how to sample a $\Delta$-regular graph. The authors mentioned that their algorithm could potentially lead to $O(n \polylog n)$ query upper bounds in different randomized settings, including $G(n,p)$ for values of $p$ close to the connectivity threshold.

Very recently, Krivelevich and Zhukovskii \cite{krivelevich2024reconstructing} studied the query complexity of reconstructing $G \sim G(n,p)$ for large values of $p \geq n^{-1+\varepsilon}$ and derived tight bounds for $p$ outside of some threshold points around which the diameter  
increases, explicitly: $p = n^{-\frac{k}{k+1} + o(1)}$ for $k \in \mathbb{N} \cup \{\infty\}$. In this range, they managed to pinpoint precisely the complexity to be $\Theta(n^{4-d}p^{2-d})$ queries with high probability, where $d$ is the diameter of $G$. For these values of $p$, the diameter is known to be a fixed constant independent of $n$ w.h.p.\footnote{with high probability: formally, with a probability that tends to $1$ when $n$ tends to infinity.}. They also studied this problem in the case of a non-adaptive algorithm\footnote{an algorithm where queries can be seen as simultaneous and do not depend on each other's answers.} and proved a bound of $\Theta(n^{4-d}p^{2-d} \log n)$ queries outside of the diameter increase threshold points mention above. This non-adaptive bound can be generalised for $p \gg \log^2n/n$, and they asked if their results could be further extended to smaller values of $p$.

\paragraph{Our contribution} In this paper, we focus on reconstructing $G \sim G(n,p)$  for $p = n^{-1 + o(1)}$, partially answering a question of \cite{krivelevich2024reconstructing}. Note that in this setting the average degree of $G$, often denoted $\Delta$ in this paper, is equal to $(1+o(1)) np$ with high probability.

\begin{restatable}{theorem}{main}
    \label{thm:main}
  For any $\varepsilon \geq 0$, and every $n \in \mathbb{N}$, for every $ \frac{2000\log n}{n} \leq p  \leq n^{-\frac12 - \varepsilon}$ , there exists an algorithm that reconstructs $G \sim G(n,p)$ using $O(\Delta^2 n\log n)$ queries in expectation, where $\Delta = (n-1)p$ is the expected average degree of $G$.
\end{restatable}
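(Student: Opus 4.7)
My approach follows the landmark-and-verify paradigm of Mathieu--Zhou \cite{mathieu2021simple}, with a tuning of the candidate set size specific to $G(n,p)$.

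\emph{Algorithm.} In Phase~1, sample $S \subseteq V$ uniformly at random of size $k$ (to be tuned, of order $\log n$ to $\Delta \log n$) and query $d(s, v)$ for every $(s, v) \in S \times V$, at cost $O(kn)$; this assigns to each $v$ a signature $\Phi(v) = (d(s, v))_{s \in S}$. In Phase~2, form, for every $v$, the candidate set
\[
C(v) = \{u \in V \setminus \{v\} : \|\Phi(u) - \Phi(v)\|_\infty \leq 1\},
\]
and query $d(u, v)$ for every $u \in C(v)$, keeping those with answer $1$. The triangle inequality forces $N(v) \subseteq C(v)$, so the reconstruction is always correct; the cost of Phase~2 is $\sum_v |C(v)|$.

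\emph{Analysis.} It remains to bound $\Ex\bigl[\sum_v |C(v)|\bigr]$ by $O(\Delta^2 n \log n)$. Split $C(v) = C_{\leq 2}(v) \sqcup C_{\geq 3}(v)$ according to whether $d(u, v) \leq 2$ or $\geq 3$. A standard first-moment estimate in $G(n,p)$ bounds the expected number of vertices at graph-distance at most $2$ from $v$ by $O(\Delta^2)$, so the first piece contributes $O(n \Delta^2)$ in expectation. The other piece demands a \emph{separation lemma}: for every pair $u, v$ with $d(u, v) \geq 3$, a uniformly random $s \in V$ satisfies $|d(s, u) - d(s, v)| \geq 2$ with probability at least some $p_{\mathrm{sep}}$. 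Independence of the $k$ landmarks then yields $\Pr[u \in C_{\geq 3}(v)] \leq (1 - p_{\mathrm{sep}})^k$, and choosing $k$ so that this quantity is at most $(\Delta^2 \log n)/n$ makes the total contribution from $C_{\geq 3}$ of order $O(n \Delta^2 \log n)$.

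\emph{Main obstacle.} The technical core is the separation lemma. The intuition is that in the BFS from a random root, the shortest paths to $u$ and to $v$ branch apart quickly; a landmark $s$ landing in the symmetric difference of the BFS balls around $u$ and around $v$ produces a signature gap of at least $2$ on the coordinate $s$. Making this precise for $G(n,p)$ calls for the standard coupling between BFS exploration and a Galton--Watson tree with $\mathrm{Poisson}(np)$ offspring distribution; this coupling is accurate as long as one has explored $o(\sqrt{n})$ vertices, which is exactly where the upper hypothesis $p \leq n^{-1/2-\varepsilon}$ enters. The lower hypothesis $p \geq 2000 \log n / n$ is then used to apply Chernoff bounds together with a union bound across the $O(\log n)$ BFS layers and the $O(n^2)$ pairs $(u,v)$, leaving enough slack in the constants to close the argument. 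Once $p_{\mathrm{sep}}$ is pinned down, balancing the costs of Phase~1 and Phase~2 is a routine calculation, and the claimed bound follows by linearity of expectation.
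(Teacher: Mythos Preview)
Your algorithmic framework and the high-level analysis plan match the paper's exactly: the same landmark-and-verify algorithm, the same split of candidate pairs according to whether $d(u,v)\le 2$ or $d(u,v)\ge 3$, and the same reduction of the second part to a separation lemma lower-bounding the fraction of vertices that witness a given non-edge. The gap is that you leave the separation lemma itself as a sketch, and the sketch underestimates the difficulty in two concrete ways.

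First, your guess for the landmark-set size $k$ (``of order $\log n$ to $\Delta\log n$'') is inconsistent with the actual separation probability. The paper proves that for a pair $u,v$ at distance $\ge 3$ one only gets $|W_{uv}|\ge(\tfrac12-o(1))\,n/\Delta^2$ witnesses, i.e.\ $p_{\mathrm{sep}}=\Theta(1/\Delta^2)$, not a constant or $1/\Delta$. Consequently $|S|$ must be of order $\Delta^2\log n$; this is precisely what makes the Phase~1 cost $\Theta(\Delta^2 n\log n)$ and saturates the stated bound. A single-vertex branching heuristic can mislead you into thinking a constant fraction of vertices are witnesses; they are not.

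Second, the one-vertex Galton--Watson coupling does not address the core difficulty, which is the \emph{interaction} of the BFS from $u$ with the BFS from $v$. The paper does not go via a tree coupling. It grows the joint sphere $N^k(u,v)=\{x:\min(d(x,u),d(x,v))=k\}$ and maintains an inductive partition $N^k(u,v)=A_k\sqcup B_k$, where $B_k$ collects the three contamination mechanisms that can destroy the witness property: descendants of $B_{k-1}$; neighbours of any $x\in A_{k-1}$ that had an edge inside layer $k-1$; and vertices with two or more parents in $A_{k-1}$. The technical work is a pair of Chernoff-driven inequalities (one bootstrap step for small $|B_{k-1}|$, one tight step once $|B_{k-1}|\ge\Delta^2/4$) showing $|B_k|\le(\tfrac12+o(1))|N^k(u,v)|$ layer by layer, valid while $|N^{k-1}(u,v)|\le n/\Delta^2$. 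At the last such layer $\ell$ one reads off $|A_\ell|\ge(\tfrac12-o(1))\,n/\Delta^2$, and a short induction shows at least half of $A_\ell$ consists of genuine witnesses. The hypothesis $p\le n^{-1/2-\varepsilon}$ enters so that this stopping condition is nontrivial and the within-layer graph $G[N^{k-1}(u,v)]$ is subcritical (this is where your ``$o(\sqrt n)$ explored vertices'' intuition is in the right spirit). Without this layerwise bookkeeping, or an equivalent substitute, $p_{\mathrm{sep}}$ is not determined and your balancing calculation cannot be completed.
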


\cref{thm:main} covers a large continuous range of values of $p$. Our result compares to the results proved independently by Krivelevich and Zhukovskii \cite{krivelevich2024reconstructing}. When $p$ is close to the thresholds $p = n^{-\frac{k}{k+1}+o(1)}$ we obtain the same complexity, up to a $n^{o(1)}$ factor. However, Krivelevich and Zhukovskii proved tighter bounds when $p$ is away from this threshold and $k$ is a constant. In contrast to \cite{krivelevich2024reconstructing}, \cref{thm:main} addresses very small values of $p$, particularly those close to the connectivity threshold (around $\Theta(\log n/n)$). In this regime, its query complexity is only a factor $O(\Delta / \log \log n)$ away from the lower bound established by Kannan, Mathieu and Zhou \cite{kannan2018graph}, which is $\Omega(\Delta n \log n /\log \log n)$. \cref{thm:main} applied to $p \in [2000 \log n, \log^2 n]$ partially answers a question of Krivelevich and Zhukovskii \cite{krivelevich2024reconstructing} regarding extending their bounds outside the range $p \gg \log^2 n /n$. It also confirms the intuition of Mathieu and Zhou in \cite{mathieu2021simple} that their algorithm could potentially be applied to this range of $p$, albeit requiring non-trivial analysis.

\begin{corollary}
 For every $n \in \mathbb{N}$, for $p \in [\frac{2000 \log n}{n}, \frac{\log^2n}{n}]$ there exists an algorithm that reconstructs $G \sim G(n,p)$ using $O(n\log^{5} n)$ queries in expectation.
\end{corollary}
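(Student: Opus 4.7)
The plan is to obtain the corollary as an immediate specialization of \cref{thm:main}. First I would check that the interval $[\tfrac{2000 \log n}{n}, \tfrac{\log^2 n}{n}]$ fits inside the hypothesis range $[\tfrac{2000 \log n}{n}, n^{-1/2 - \varepsilon}]$ of the theorem. Taking $\varepsilon = 0$ (permitted since the theorem allows $\varepsilon \geq 0$), the required inclusion reduces to $\log^2 n / n \leq n^{-1/2}$, i.e.\ $\log^2 n \leq \sqrt{n}$, which holds for $n$ larger than some absolute constant; for the finitely many smaller $n$ the corollary is vacuous up to the hidden constant in the $O(\cdot)$.

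Next I would bound the expected average degree. Since $p \leq \log^2 n / n$, we have $\Delta = (n-1)p \leq \log^2 n$, so $\Delta^2 \leq \log^4 n$. Plugging this into the query bound of \cref{thm:main} yields
\[
O(\Delta^2 n \log n) \;=\; O(n \log^5 n),
\]
which is exactly the claimed complexity.

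There is no real obstacle here: the corollary is a one-line substitution once \cref{thm:main} is in hand, and all the work lies in the proof of the main theorem rather than in this specialization.
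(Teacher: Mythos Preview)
Your proposal is correct and matches the paper's treatment: the paper states the corollary without proof, deriving it directly from \cref{thm:main} by noting that for $p\leq \log^2 n/n$ one has $\Delta\leq \log^2 n$, so $O(\Delta^2 n\log n)=O(n\log^5 n)$. Your additional care in checking that $\log^2 n/n\leq n^{-1/2}$ for large $n$ is appropriate and fills in a detail the paper leaves implicit.
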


The author didn't try to optimize the constant $2000$ in the previous results. 

\medskip  

To prove \cref{thm:main}, we employ the natural algorithm introduced by Mathieu and Zhou \cite{mathieu2013graph} and also used in \cite{krivelevich2024reconstructing} as described in the preliminaries. Similar to previous works, the correctness of the algorithm is straightforward, with the main challenges lying in its complexity analysis. The primary novel aspect, compared to \cite{krivelevich2024reconstructing}, is deriving bounds even for very sparse graphs where the diameter grows with $n$ and $p \leq \log^2n/n$. This requires a detailed study of the interactions within the $k$-neighborhoods of non-adjacent vertices. More specifically, given two vertices $u,v \in V(G)$, and an integer $k$, we derive precise bounds for the number of vertices in the set $\{x \in V(G) \mid d(u,x) = k \, \wedge \, d(v,x) \in [k,k+1]\}$. Our method might also be applicable for establishing bounds for other types of distance profiles between pairs of vertices. That is given two vertices $u,v$ and two distances $i$ and $j$, computing tight bounds on the distribution of vertices satisfying $\{x \mid d(u,x) = i \wedge d(v,x) = j\}$.

\section{Preliminaries}
\label{sec:prelim}

We denote by $\qu_G(u,v)$ the query to the oracle that answers the shortest distance between the vertices $u$ and $v$ in $G$. We also slightly abuse notation and denote by $\qu_G(A,B)$ the set of queries $\{ \qu_G(a,b) \mid (a,b) \in A\times B\}$. We denote by $d_G$ the distance function of the graph $G$. We also denote by $N^k(u)$ (resp. $N^{\leq k}(u)$ the set of vertices at distance exactly $k$ (resp. at most $k$) from $u \in V(G)$. We often omit the subscript $G$ when $G$ is clear from context. For $G$ a graph and $A$ a subset of vertices of $G$ we denote by $G[A]$ the graph induced by $G$ on $A$. 
We also let $[k] = \{1,\ldots, k\}$ for $k \in \mathbb{N}$.

For $n \in \NN$ and $p \in [0,1]$, we denote by $\operatorname{Bin}(n,p)$ the binomial distribution of parameter $n$ and $p$ and by $\operatorname{B}(p)$ the Bernouilli distribution of parameter $p$. 

\subsection{A Simple Algorithm}

We sketch here the algorithm we are using, which has been introduced in \cite{mathieu2021simple} and also used in \cite{krivelevich2024reconstructing}. Consider a random set $S \subseteq V(G)$ of fixed size $|S|=s$. First, the algorithm asks $\qu(S,V(G))$. From these queries, it computes the \emph{pseudo-edges} of $G$ define as
$$
\Tilde{E} := \{ \{u,v\} \subseteq V(G) \mid \forall s \in S, |d(s,u) - d(s,v)| \leq 1 \}.
$$ 
The algorithm asks $\qu(\Tilde{E})$ and finishes. Note that this algorithm always completely reconstructs the graph. If there exists $s$ such that $|d(s,u) - d(s,v)| \geq 2$ then $uv \notin E(G)$, therefore $\Tilde{E}$ is a superset of $E(G)$. The number of queries asked is at most $ns + |\Tilde{E}|$.
Therefore the challenge in proving precise upper bound lies in showing that even for small values of $s$, $|\Tilde{E}|$ does not become excessively large.

\subsection{Tools}

\begin{lemma}[Chernoff's bound \cite{chernoff1952measure}]
    \label{chernoff}
   Given $n$ independent random i.i.d. variables $X_1,\ldots,X_n$ taking values in $\{0,1\}$, let us denote by $\mu = \Ex[\sum_{i=1}^n X_i]$. Then for any $0 < \delta < 1$,
   $$
   \Pr\left(\left|\sum_{i=1}^n X_i - \mu\right| \geq \delta \mu\right) \leq 2e^{-\delta^2 \mu/3}.
   $$
   Also for any $\delta \geq 0$,
   $$
   \Pr\left(\sum_{i=1}^n X_i \geq (1 +\delta) \mu\right) \leq e^{-\delta^2 \mu/(2 + \delta)}.
   $$
\end{lemma}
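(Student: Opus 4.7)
The plan is to apply the standard exponential-moment (Cramér--Chernoff) method. Write $S = \sum_{i=1}^n X_i$, and for any $t > 0$ combine Markov's inequality with the MGF: $\Pr(S \geq a) \leq e^{-ta}\,\Ex[e^{tS}]$. By independence, $\Ex[e^{tS}] = \prod_i \Ex[e^{tX_i}]$, and since each $X_i$ is Bernoulli with mean $p = \mu/n$, I have $\Ex[e^{tX_i}] = 1 + p(e^t - 1) \leq \exp\!\bigl(p(e^t - 1)\bigr)$ from the elementary inequality $1 + x \leq e^x$. Multiplying gives $\Ex[e^{tS}] \leq \exp\!\bigl(\mu(e^t - 1)\bigr)$, which is the workhorse estimate.

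For the upper tail $\Pr(S \geq (1+\delta)\mu)$ with $\delta \geq 0$, I would optimize by choosing $t = \log(1+\delta) > 0$, yielding the classical relative-entropy bound
$$\Pr(S \geq (1+\delta)\mu) \leq \left(\frac{e^{\delta}}{(1+\delta)^{1+\delta}}\right)^{\mu} = \exp(-\mu\,\varphi(\delta)),$$
where $\varphi(\delta) = (1+\delta)\log(1+\delta) - \delta$. To recover the form stated in the lemma, I would then invoke the analytic inequality $\varphi(\delta) \geq \delta^2/(2+\delta)$ valid for all $\delta \geq 0$, which gives the second displayed bound directly.

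For the two-sided bound with $0 < \delta < 1$, I would repeat the argument for the lower tail by applying Markov's inequality to $e^{-tS}$ with $t > 0$, obtaining $\Pr(S \leq (1-\delta)\mu) \leq \exp(-\mu\,\psi(\delta))$ with $\psi(\delta) = (1-\delta)\log(1-\delta) + \delta$. The tail bounds $\varphi(\delta) \geq \delta^2/3$ (which follows from $\varphi(\delta) \geq \delta^2/(2+\delta)$ since $2 + \delta \leq 3$ on this range) and $\psi(\delta) \geq \delta^2/2 \geq \delta^2/3$ both hold on $(0,1)$, so each tail is at most $e^{-\delta^2\mu/3}$. A union bound then produces the factor of $2$ in the stated two-sided inequality.

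The only genuine work is verifying the two analytic inequalities on $\varphi$ and $\psi$; this is routine but is the main technical content of the proof. Both can be checked by noting that the difference between each side vanishes (along with its first derivative) at $\delta = 0$ and then showing the second derivative of the difference has the correct sign on the relevant interval, e.g.\ via the Taylor expansion $\log(1+x) = x - x^2/2 + x^3/3 - \cdots$ applied term by term. Everything else is a direct computation.
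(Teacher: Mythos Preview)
Your argument is the standard Cram\'er--Chernoff exponential-moment proof and is correct; the analytic inequalities $\varphi(\delta)\ge \delta^2/(2+\delta)$ and $\psi(\delta)\ge \delta^2/2$ are indeed the only non-mechanical steps and your sketch of how to verify them is fine. Note, however, that the paper does not give its own proof of this lemma: it is quoted as a classical tool with a citation to \cite{chernoff1952measure} and used as a black box, so there is nothing in the paper to compare your approach against.
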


The following results are standard and can be derived from the first chapters of \cite{bollobas1998random}. We include proofs for completeness.

\begin{lemma}
    \label{lem:maxmindegree}
   For every $n \in \mathbb{N}$, for $p \geq \frac{2000 \log n}{n}$, let $G \sim G(n,p)$. Then, we have that $G$ has max degree at most $\frac32 p(n-1)$ and min degree at least $\frac12 p(n-1)$ with probability $1 - o(n^{-1})$.
\end{lemma}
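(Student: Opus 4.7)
The plan is a direct application of Chernoff's bound followed by a union bound over the $n$ vertices. Fix a vertex $v \in V(G)$. Since $G \sim G(n,p)$, the degree $\deg(v)$ is the sum of $n-1$ independent $\operatorname{B}(p)$ random variables, hence $\deg(v) \sim \operatorname{Bin}(n-1,p)$, with mean $\mu = p(n-1)$.

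Applying \cref{chernoff} with $\delta = 1/2$ gives
\[
\Pr\!\left( \bigl| \deg(v) - \mu \bigr| \geq \tfrac{1}{2} \mu \right) \;\leq\; 2 \exp\!\left( -\tfrac{\mu}{12} \right).
\]
Under the hypothesis $p \geq 2000 \log n / n$, we have $\mu = p(n-1) \geq 2000 \log n \cdot \tfrac{n-1}{n} \geq 1000 \log n$ for $n \geq 2$, so the bound above is at most $2 n^{-1000/12}$, which is much smaller than $n^{-2}$ for every reasonable $n$.

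A union bound over the $n$ vertices then ensures that with probability at least $1 - 2 n \cdot n^{-1000/12} = 1 - o(n^{-1})$, every vertex $v$ satisfies $\tfrac{1}{2} p(n-1) \leq \deg(v) \leq \tfrac{3}{2} p(n-1)$, which is exactly the required conclusion. No real obstacle arises; the only thing to check is that the constant $2000$ is large enough to beat the $n$ coming from the union bound after the Chernoff exponent is turned into a negative power of $n$, and this is comfortably satisfied.
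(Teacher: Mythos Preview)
Your proof is correct and follows essentially the same approach as the paper: apply the two-sided Chernoff bound with $\delta = \tfrac12$ to the degree of a fixed vertex (a $\operatorname{Bin}(n-1,p)$ variable with mean $\mu = p(n-1)$), use $p \geq 2000\log n/n$ to make the failure probability $o(n^{-2})$, and then union bound over the $n$ vertices.
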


\begin{proof}
    Fix a vertex $u \in V(G)$, the degree of $u$ is the sum of the independent $\{0,1\}$ variables $(X_v)_{v \in V(G)\setminus \{u\}}$ defined by $X_v = 1$ if and only if $uv \in E(G)$. Note that $X_v \sim B(p)$ for all $v$. We apply Chernoff's bound with $\delta = \frac12$ and $\mu = p(n-1)$.
    \begin{align*}
          \Pr\left(\left|\sum_{v \in V(G)\setminus \{u\}} X_v - p(n-1)\right| \geq \frac12 p(n-1) \right) \leq 2e^{-p(n-1)/12} \leq n^{-2},
    \end{align*}
    where the last inequality comes from $p \geq 2000 \log n /n$. It now suffices to take a union bound over all vertices to conclude.
\end{proof}

\begin{lemma}
    \label{lem:isolatedgnp}
    For $N \in \NN$, for any $\Delta > 0$, any $p \leq \frac{1}{N\Delta}$  and $G \sim G(N,p)$, the number of non-isolated vertices in $G$ is at most $4N/\Delta$ with probability at least $1 - 2e^{-N/(3\Delta)}$.
\end{lemma}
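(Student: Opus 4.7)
The plan is to bound the number of non-isolated vertices $Y$ by twice the number of edges $X$, and then apply Chernoff to $X$. Since every non-isolated vertex is an endpoint of some edge and each edge has exactly two endpoints, $Y \leq 2X$, so it suffices to show $\Pr(X \geq 2N/\Delta) \leq e^{-N/(3\Delta)}$, which is even stronger than the stated bound (the factor of $2$ out front simply provides slack).

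The random variable $X$ is distributed as $\operatorname{Bin}(\binom{N}{2}, p)$, since the $\binom{N}{2}$ potential edges are included independently. By monotone coupling in $p$, I would reduce to the extreme case $p = 1/(N\Delta)$ (smaller $p$ only makes the upper tail of $X$ smaller), so that $\mu := \Ex[X] = (N-1)/(2\Delta)$. I would then apply the second form of \cref{chernoff} with $\delta$ chosen so that $(1+\delta)\mu = 2N/\Delta$, namely $\delta = (3N+1)/(N-1) \geq 3$ for $N \geq 2$ (the case $N = 1$ being trivial as there are no edges). This gives
\[
\Pr\!\left(X \geq \tfrac{2N}{\Delta}\right) \;\leq\; \exp\!\left(-\frac{\delta^2 \mu}{2+\delta}\right) \;=\; \exp\!\left(-\frac{(3N+1)^2}{2\Delta(5N-1)}\right),
\]
which one checks is at most $e^{-N/(3\Delta)}$ via a routine polynomial inequality (equivalent to $17N^2 + 20N + 3 \geq 0$).

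There is no real obstacle here: the only random-graph input is the independence of the edges of $G(N,p)$, and the remaining work is just arithmetic in the Chernoff exponent. A slightly cleaner variant, if one wants to avoid the explicit value of $\delta$, is to set $r = 2N/\Delta$ and use the equivalent rearrangement $\Pr(X \geq r) \leq \exp(-(r-\mu)^2/(r+\mu))$: since $(r-\mu)^2/(r+\mu)$ is decreasing in $\mu$ on $[0, r]$, its minimum over $\mu \in [0, r/4]$ is attained at $\mu = r/4$ and equals $9r/20 = 9N/(10\Delta) \geq N/(3\Delta)$, which closes the argument at once.
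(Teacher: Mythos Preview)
Your proof is correct and follows essentially the same approach as the paper: bound the number of non-isolated vertices by twice the edge count, reduce to the extremal value $p = 1/(N\Delta)$ via monotonicity, and apply Chernoff to $|E(G)|$. Your handling of the Chernoff exponent is in fact more careful than the paper's (which states the application with $\delta = 1$ somewhat loosely), but the structure and ideas are identical.
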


\begin{proof}
    Let $X_p$ denote the random variable counting the number of isolated vertices in $G(N,p)$. Note that $X_{\frac1{N\Delta}}$ stochastically dominates $X_p$ for $p \leq \frac1{N\Delta}$ (i.e for any $x \in \mathbb{R}$, $\Pr[X_{\frac1{N\Delta}} \geq x] \geq \Pr[X_{p} \geq x] $). Therefore we will only focus on $p = \frac{N}{\Delta}$ from now on.
    
    Note that an edge in $G$ creates at most two non-isolated vertices, therefore the number of non-isolated is at most twice the number of edges of $G$. Moreover we have $\Ex[|E(G)|] = p {N \choose 2} \leq N^2$. We bound the number of edges of $G$ using Chernoff's bound (\cref{chernoff}) with $\delta = 1$ and $\mu = p{N \choose 2}$.
    \begin{align*}
        \Pr\left( \left||E(G)| - \mu\right| \geq \frac{N} {\Delta} \right) \leq e^{-\frac{N}{3\Delta}}
    \end{align*}
    which, using the remark above, directly implies the lemma statement.
\end{proof}

\section{Proof of \cref{thm:main}}

From now on, and for the rest of the paper let us set $\frac{2000 \log n}{n} \leq p \leq n^{-1/2-\varepsilon}$. We consider $G \sim G(n,p)$. Let $\Delta = p(n-1)$. Note that $\Delta$ is the expected average degree in $G$. A key notion for the proof is the notion of \emph{witness} defined below. Intuitively, a witness $x$ of a non-edge $uv \notin E(G)$ is a vertex such the answers to $\qu(x,\{u,v\})$ are enough to conclude that $uv \notin E(G)$.

\begin{definition}
    We say that $s \in V(G)$ is a witness of the non-edge $uv \notin E(G)$ if $|d(s,u) - d(s,v)| \geq 2$. We denote $W_{uv}$ the set of witnesses of the non-edge $uv$.
\end{definition}

Let us first prove the following key lemma, stating that most pairs of vertices have a large number of witnesses.

\begin{lemma}[Main Lemma]
    \label{lem:linearwitnesses}
    The set $\mathcal{E} = \{uv \notin E(G) \mid W_{uv} \geq (1/2 - o(1)) n/\Delta^2\}$ has size at least ${n \choose 2} - O(n \Delta^2)$ with probability $1 - o(n^{-1})$.
\end{lemma}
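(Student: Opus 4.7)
The plan is to identify $\sim n/\Delta^2$ witnesses for any non-edge $uv$ with $d(u,v)\geq 3$, by looking at the ``fringe'' of the BFS from $u$. Pick a level $\ell$ with $\Delta^\ell\in[n/\Delta^2,n/\Delta]$, so that concentrated BFS growth gives $|N^\ell(u)| = (1\pm o(1))\Delta^\ell \geq (1-o(1))\,n/\Delta^2$ while $|N^{\leq\ell+1}(v)|\leq (1+o(1))\,n/\Delta$ (dominated by its top layer). Any $x\in N^\ell(u)\setminus N^{\leq\ell+1}(v)$ satisfies $d(x,u)=\ell$ and $d(x,v)\geq\ell+2$, so $|d(x,u)-d(x,v)|\geq 2$ and $x$ witnesses the non-edge $uv$. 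The argument therefore reduces to showing that $|N^\ell(u)\cap N^{\leq\ell+1}(v)| = o(|N^\ell(u)|)$ for all but $O(n\Delta^2)$ pairs.

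\paragraph{BFS layer concentration.}
First I would establish, with probability $1-o(n^{-1})$, that every $u\in V(G)$ has $|N^k(u)| = (1\pm o(1))\Delta^k$ for every $k$ up to the fringe level. This is done by induction on $k$: conditional on the revealed $N^{\leq k}(u)$, each unexplored vertex joins $N^{k+1}(u)$ independently with probability $1-(1-p)^{|N^k(u)|} = (1+o(1))\,p\,|N^k(u)|$, so \cref{chernoff} (with a small $\delta=o(1)$) yields $|N^{k+1}(u)| = (1\pm o(1))\,\Delta\,|N^k(u)|$. Provided $|N^k(u)|\cdot\Delta$ stays at least polylogarithmic, the layerwise deviation is small and accumulates only as a $(1\pm o(1))$ factor across the $O(\log_\Delta n)$ relevant layers, thanks to $p\geq 2000\log n/n$; a union bound over all $u$ finishes the step.

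\paragraph{Witness count via joint BFS.}
For a non-edge $uv$ with $d(u,v)\geq 3$, I would bound $|N^\ell(u)\cap N^{\leq\ell+1}(v)|$ by a joint exposure argument. Expose BFS from $u$ through level $\ell$, and then extend BFS from $v$ through level $\ell+1$ using only edges not yet revealed. Conditional on the $u$-exploration, each $x\in N^\ell(u)$ lies in $N^{\leq\ell+1}(v)$ with probability $O(1/\Delta)$ (since $v$'s BFS covers only $O(n/\Delta)$ vertices by level $\ell+1$), essentially independently over the $x$'s. A Chernoff bound then gives $|N^\ell(u)\cap N^{\leq\ell+1}(v)| = O(n/\Delta^3) = o(n/\Delta^2)$ with failure probability $o(n^{-3})$ per pair (the mean is polynomially large throughout our regime), so a union bound over the $\binom{n}{2}$ non-edges yields $(1-o(1))\,n/\Delta^2\geq (1/2-o(1))\,n/\Delta^2$ witnesses simultaneously for every pair with $d(u,v)\geq 3$.

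\paragraph{Exceptional pairs and main obstacle.}
Pairs with $d(u,v)\leq 2$ form the $O(n\Delta^2)$ error term: by \cref{lem:maxmindegree} each vertex $u$ has $|N^{\leq 2}(u)| = O(\Delta^2)$, giving $O(n\Delta^2)$ such pairs in total. The main obstacle I anticipate is the joint-BFS step, where the two explorations are coupled through the common underlying graph and the conditioning must be set up so that the $v$-side Chernoff analysis genuinely runs on unexposed edges. A secondary subtlety is handling the choice of $\ell$ when $\log_\Delta n$ lies near an integer: the fringe layer must land in the sweet spot $\Delta^\ell\in[n/\Delta^2,n/\Delta]$, which is not automatic and may require choosing $\ell$ adaptively (or using the trivial witnesses $u,v$ when $n/\Delta^2$ is already $O(1)$ near the dense boundary $\Delta\approx n^{1/2}$).
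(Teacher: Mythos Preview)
Your plan diverges from the paper's route and, as written, has a genuine gap at exactly the point you flag as the ``main obstacle''. The paper does \emph{not} compare two separate BFS's; it runs a single joint exploration from $\{u,v\}$, defining $N^k(u,v)=\{x:\min(d(u,x),d(v,x))=k\}$ and splitting each such shell into a ``good'' part $A_k$ and a ``bad'' part $B_k$. The bad part absorbs every vertex whose distance profile could fail to witness $uv$ (neighbours of $B_{k-1}$, neighbours of intra-layer edges in $A_{k-1}$, and vertices with two parents in $A_{k-1}$), and the heart of the proof is an inductive bound $|B_k|\le \tfrac12(1+o(1))|N^k(u,v)|$. That machinery is precisely what replaces your independence assertion.

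Your two-phase exposure cannot be set up as described. After revealing BFS from $u$ to level $\ell$, every edge with an endpoint in $N^{\le \ell-1}(u)$ is exposed. But $d(u,v)$ can be as small as $3$ while $\ell\sim\log_\Delta n$ is large, so typically $v\in N^{\le \ell-1}(u)$; the first several layers of the $v$-BFS are then completely determined by already-revealed edges, and there is no fresh randomness on which to run a Chernoff bound for the events $\{x\in N^{\le \ell+1}(v)\}$. The events are neither independent nor approximately so: for instance, every $x$ lying in the $u$-BFS subtree below a neighbour $w\in N(v)\cap N^2(u)$ deterministically satisfies $d(v,x)\le \ell-1$. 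Controlling how many such $x$ there are is exactly the content of the paper's $B_k$ analysis, so your plan does not sidestep it.

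There is also a quantitative slip: from $\Delta^\ell\in[n/\Delta^2,n/\Delta]$ you cannot conclude $|N^{\le\ell+1}(v)|\le(1+o(1))\,n/\Delta$, since $\Delta^{\ell+1}$ may be as large as $n$ at the top of that range. This is fixable (choose the smallest admissible $\ell$, and observe that even a constant-fraction loss still leaves $\gg n/\Delta^2$ witnesses), but the independence issue above is not, short of reintroducing something equivalent to the paper's $A_k/B_k$ bookkeeping.
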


\begin{proof}

    Since we aim to prove the statement with a probability of \(1 - o(n^{-1})\), we can apply a union bound to a constant number of events, each having a probability of \(1 - o(n^{-1})\). For example, we will assume deterministic conditioning on the fact that \cref{lem:maxmindegree} holds. We will also use similar reasoning throughout the proof, in which case, we will say that we assume that an event (here \cref{lem:maxmindegree}) holds deterministically.
    We first make the following easy claim:
    \begin{claim}
        \label{cl:dist3}
       There are at most $\frac94 n\Delta^2$ pairs of vertices $u,v \in V(G)$ satisfying $d(u,v) \leq 2$ with probability $1 - o(n^{-1})$.
    \end{claim}

    \begin{proof}
        For a fixed $u \in V(G)$, from \cref{lem:maxmindegree}, $|N^2(u)| \leq \left(\frac32 \Delta\right)^2  = \frac94 \Delta^2$. The claim follows from double counting.
    \end{proof}
    For the purpose of the lemma, we can put aside such pairs of vertices and only prove that the set $\mathcal{E}^*:= \{uv \notin E(G) \mid d(u,v) \geq 3 \wedge |W_{uv}| \geq n/\Delta^2\} \subseteq \mathcal{E}$ is large enough. We will in fact prove $\mathcal{E}^* = \{uv \notin E(G) \mid d(u,v) \geq 3 \}$ with probability $1 - o(n^{-1})$. To do so, let us consider a uniformly chosen pair of vertices $uv \notin E(G)$ satisfying $d(u,v) \geq 3$ and let us prove that $|W_{uv}| \geq (1/2 - o(1))n/\Delta^2$ with probability $1 - o(n^{-3})$.

    Let us denote by $N^k(u,v)$ the common $k$-neighbourhood of $u$ and $v$, 
    $$N^k(u,v) = \{x \in V(G) \mid \min(d(u,x),d(v,x)) = k \}.$$ 
    We will reveal $G$ by spheres of increasing radius around $u$ and $v$. By that, we mean that we first reveal $N(u,v)$ then $N^2(u,v), N^3(u,v), \ldots, N^k(u,v)$. We define iteratively a partition of $N^{k}(u,v)$ into two sets $A_k$ and $B_k$.
    Intuitively, $A_k$ will preserve properties that will ensure that most of the vertices in $A_k$ are witnesses of the pair $u,v$. To do so, we will put in $B_k$ any vertex that could potentially be a non-witness of $uv$. We divide this into three cases represented in \cref{fig:Bk}. We describe each part below and give a short informal explanation of why these vertices could be non-witnesses:
    \begin{itemize}
        \item $\mathcal{B}^k_1 = N(B_{k-1})$: Suppose $x \in B_{k-1}$ is a non-witness, for example $d(x,u) = d(x,v) = k-1$, then any neighbour $y$ of $x$ in $N^k(u,v)$ satisfies $d(y,u) = d(y,v) = d(x,u) + 1 = k$ and is a non-witness of $uv$, therefore $N(x)$ should be in $B_k$.
        
        \item $\mathcal{B}^k_2 = N(\{x \in A_{k-1} \mid N(x) \cap N^{k-1}(u,v) \neq \emptyset\})$: Consider $x,y \in A_{k-1}$ such that $xy \in E(G)$, and suppose that before the $xy$ edge is revealed, both $x$ and $y$ have a distance profile that makes them potential witnesses of $uv$. One possible case is $d_{G \setminus\{xy\}}(x,u) = d_{G \setminus\{xy\}}(y,v) = k-1$, $d_{G \setminus\{xy\}}(x,v) > k $ and $d_{G \setminus\{xy\}}(y,u) > k $. When the edge $xy$ is revealed the distance profiles are modified: $d(x,u) = d(y,v) = k-1$ and $d(x,v) = d(y,u) = k$. Both $x$ and $y$ become non-witnesses. Any neighbour $z \in N(x) \cup N^k(u,v)$ satisfy $d(z,u) = k$ and $k \leq d(z,v) \leq d(z,x) + d(x,v) = k+1$. A symmetric reasoning for $y$ implies that every vertex in $\left(N(x) \cup N(y)\right) \cap N^k(u,v)$ is not a witness of $uv$.
        
        \item $\mathcal{B}^k_3 = \left\{x \in N^k(u,v) \middle| |N(x) \cap A_{k-1}| \geq 2\right\}$: Suppose $x,y$ are two distinct witnesses of $uv$. Again, one possible case is $d(x,u) = d(y,u) = k-1$, $d(x,v) > k$ and $d(y,u) > k$. Suppose they share a neighbour $z$ in $N^k(u,v)$. Now the distance profile of $z$ is $d(z,u) = d(x,u) + 1 = k$ and $d(z,v) = d(y,v) + 1 = k$, therefore $z$ is a non-witness of $uv$.
    \end{itemize}

    Let $A_1 = N(u,v)$ and $B_1 = \emptyset$ we define recursively, for $k \in \NN$,
    \begin{align*}
        B_k &= N^k(u,v) \cap ( \mathcal{B}^k_1 \cup \mathcal{B}^k_2 \cup \mathcal{B}^k_3) \qquad A_k = N^k(u,v) \setminus B_k.
    \end{align*}

    \begin{figure}[H]
        \centering
        \begin{tikzpicture}[scale = 0.85]
\tikzstyle{vertex}=[circle, draw, fill=black!50,
                        inner sep=0pt, minimum width=4pt]
                        
\draw (-3,0) [color=black!35] ellipse (2.5cm and 1.5cm); 
\draw (3,0) [color=black!35] ellipse (2.5cm and 1.5cm); 
\draw (0,3) [color=black!35] ellipse (1cm and 0.75cm);
\draw (3,3) [color=black!35] ellipse (1cm and 0.75cm);
\draw (6,3) [color=black!35] ellipse (1cm and 0.75cm);
\draw (-3.5,3) [color=black!35] ellipse (2cm and 0.75cm);
\draw[dashed] (-8,1.85) -- (8,1.85);

\node[vertex] (a) at (4.5,0.5) {};
\node[vertex] (b) at (6,3) {};
\node[vertex] (c) at (3,3) {};
\node[vertex] (d) at (0,3) {};

\node[vertex] (e) at (-4.5,0.5) {};
\node[vertex] (f) at (-3.2,0.75) {};
\node[vertex] (f2) at (-2.5,-0.25) {};
\node[vertex] (g) at (-1.5,0.25) {};
\node[vertex] (h) at (-1,0) {};
\node[vertex] (i) at (-3.5,3) {};

\draw (a) -- (b);
\draw (h) -- (g) -- (c);
\draw (f) -- (d) -- (f2);
\draw (i) -- (e);

\node at (-7,0) {$N^{k-1}(u,v)$};
\node at (-7,3) {$N^{k}(u,v)$};
\node at (0,4.25) {$\mathcal{B}_3$};
\node at (3,4.25) {$\mathcal{B}_2$};
\node at (6,4.25) {$\mathcal{B}_1$};
\node at (-3.5,4.25) {$A_k$};
\node at (-3,-2) {$A_{k-1}$};
\node at (3,-2) {$B_{k-1}$};

\end{tikzpicture}
        \caption{Representation of the three parts $\mathcal{B}^k_1,\mathcal{B}^k_2,\mathcal{B}^k_3$ that compose $B_k$}
        \label{fig:Bk}
    \end{figure}
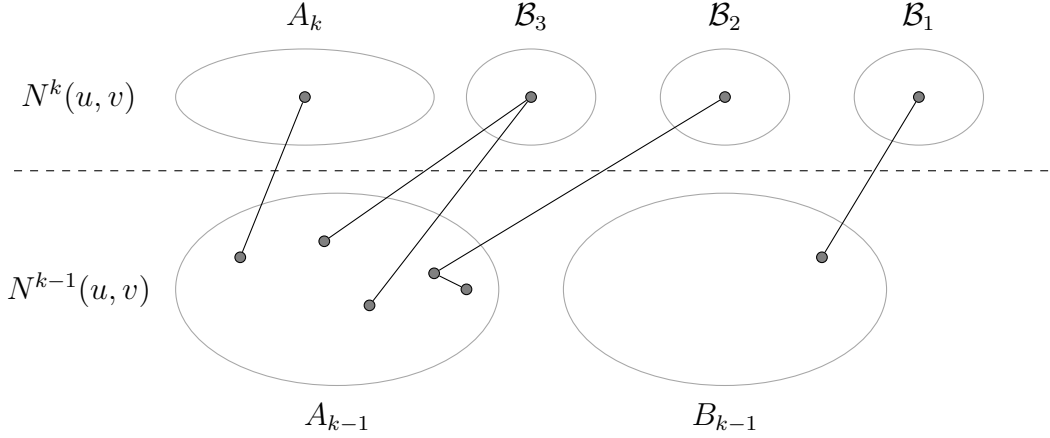

    The proof is structured around three claims. \cref{cl:Akisgood} ensures that at least half of the vertices of $A_k$ are witnesses of $u,v$ with high probability.

    \begin{restatable}{claim}{Akisgood}
        \label{cl:Akisgood}
        For any $k$ such that $|N^{k}(u,v)| \leq n/\Delta^2$, we have $|A_{k} \cap W_{uv}| \geq \frac12 |A_{k}|$ with probability $1 - o(n^{-3})$.
    \end{restatable}

    \cref{cl:initBksmall} and \cref{cl:Bkissmall} control the size of $B_k$ (and therefore $A_k$) inductively. Specifically, \cref{cl:initBksmall} addresses the case where $|B_k|$ is small. In such cases, we allow a constant factor increase in the upper bound of $|B_{k+1}|$. Conversely, \cref{cl:Bkissmall} handles larger $|B_k|$ with a tighter concentration and increase factor close to 1. 

    \begin{restatable}{claim}{initBksmall}
        \label{cl:initBksmall}
        For any constant $\beta \geq 0$, for $n$ large enough and for any $k \in \NN$ if $|N^{k-1}(u,v)| \leq n/\Delta^2$ then 
        $$|B_{k-1}| \leq \beta |N^{k-1}(u,v)| \implies |B_{k}| \leq \left(3 \beta + \frac{1}{26}\right) |N^{k}(u,v)|$$ with probability at least $1 - n^{-4}$. 
    \end{restatable}

    \begin{restatable}{claim}{Bkissmall}
        \label{cl:Bkissmall}
       For $n$ large enough, for any $k \geq 1$, if $|N^{k-1}(u,v)| \leq n/\Delta^2$  and $|B_{k-1}| \geq \Delta^2/4$ then
        $$|B_{k-1}| \leq  \frac{1}{2} \left(1 + \frac{30}{\log n}\right)^{k-1}|N^{k-1}(u,v)| \implies |B_{k}| \leq  \frac12 \left(1 + \frac{30}{\log n}\right)^{k}|N^{k}(u,v)|$$ with probability at least $1 - n^{-4}$. 
    \end{restatable}

    The $(1 + \frac{30}{\log n})^k$ factor is just a product of the induction. It is important to note that the range of $k$ of interest in this claim is upper bounded by the diameter of $G$, which means $k = O(\log n / \log \log n)$. In this range, we have $(1 + \frac{30}{\log n})^k \leq e^{30k/\log n} = e^{o(1)} = 1 + o(1)$.

    \medskip

   Let us first prove that the lemma follows from the three claims. We suppose that \cref{cl:Akisgood}, \cref{cl:initBksmall} and \cref{cl:Bkissmall} hold deterministically. We will now show that for any $k$ such that $|N^{k-1}(u,v)| \leq n/\Delta^2$, we have $|A_{k}| \geq (1/2 - o(1)) |N^{k-1}(u,v)|$. In particular, we only have to show that $|B_k| \leq (1/2 + o(1) |N^{k-1}(u,v)|$. Therefore, we can consider $i \in \NN$ the first step such that $B_{i+1}$ is non-empty. Note that $i\geq 1$ as $B_1 = \emptyset$. By \cref{cl:initBksmall} applied with $\beta = 0$, we have that $1 \leq |B_{i+1}| \leq \frac1{26}|N^{i+1}(u,v)|$. With a similar reasoning (up to rescaling $\beta$), we obtain the following,
    $$
    |B_{i+2}| \leq \frac2{13}|N^{i+2}(u,v)|\quad \text{and}  \quad |B_{i+3}| \leq \frac12|N^{i+3}(u,v)|.
    $$
    On the lower bound side, note that, we can apply two Chernoff's bounds (\cref{chernoff}). First to lower bound $|B^{k+2}| \leq |\mathcal{B}^{k+2}_1|$, with parameter $\delta = \frac12$ and $\mu = (1 - o(1))\Delta |B^{k+1}| \geq (1 - o(1))\Delta $. Then, to bound $|B^{k+3}| \leq |\mathcal{B}^{k+3}_1|$, with parameter $\delta = \frac12$ and $\mu \geq (1 - o(1))\frac{\Delta^2}{2}|B_{k-1}|$. We obtain,
    $$
        \Pr(|B^{k+3}| \leq \Delta^2/4) \leq 2e^{-\Delta/12} = o(n^{-3}). 
    $$
    
    For $k \geq i+3$, we can apply \cref{cl:Bkissmall} for $\ell$ defined as the largest $k$ such that $|N^{k-1}(u,v)| \leq n/\Delta^2$ is satisfied. Note that as $\frac{2000\log n}{n} \leq p \leq n^{-1/2 - \varepsilon}$, $\ell$ is well-defined. $\ell$ is upper bounded by the diameter of $G$ therefore $\ell \leq \log n /\log\log n$ whp. We obtain the following,
    \begin{align*}
        |A_\ell| &\geq \frac12\left(1 + \frac{30}{\log n}\right)^\ell |N^{\ell}(u,v)| \\
        & \geq \frac12\left(1 + \frac{30}{\log n}\right)^{\frac{\log n}{\log \log n}} \frac{n}{\Delta^2} \\
        & \geq \left(\frac12 + o(1) \right) \frac{n}{\Delta^2}.
    \end{align*}
    It is now sufficient to apply \cref{cl:Akisgood} for $k = \ell$ to conclude that  $|W_{uv}| \geq |A_k|/2 \geq (1/2 - o(1)) n/\Delta^2$ with probability $1 - o(n^{-3})$. By using a union bound over all such pairs. We obtain that all pairs $uv \notin E(G)$, such that $d(u,v)\geq 3$ satisfy $|W_{uv}| \geq (1/2 +o(1)) n/\Delta^2$. Finally, using \cref{cl:dist3} we conclude that $|\mathcal{E}| \geq {n \choose 2} - O(n\Delta^2)$ with probability $1 - o(n^{-1})$.

    \medskip 
    
    We will now prove \cref{cl:Akisgood} - \cref{cl:Bkissmall}. We start by \cref{cl:Bkissmall} as it is the cornerstone of the proof.

    \Bkissmall*

    \begin{proof}[Proof of \cref{cl:Bkissmall}]
    
        We will upper bound the size of the three different parts $\mathcal{B}^k_1, \mathcal{B}^k_2$ and $\mathcal{B}^k_3$ that compose $B_k$ independently:
        
        \begin{itemize}
            \item $\mathcal{B}^k_1 = N(B_{k-1})$: Consider a vertex $x \notin N^{k}(u,v)$, the probability that $x \in N(B_{k-1})$ can be written as: 
            \begin{align*}
                \Pr(x \in N(B_{k-1})) &= \Pr(\operatorname{Bin}(|B_{k-1}|,p) \geq 1) \\
                &= 1 - (1-p)^{|B_{k-1}|} \\
                &= (1 - o(1)) p|B_{k-1}|
            \end{align*}
            therefore, 
            $$\Ex[|\mathcal{B}^k_1|] = (1-o(1))p|B_{k-1}|(n - |B_{k-1}|) = (1-o(1))\Delta|B_{k-1}|\leq \Delta |B_{k-1}|.$$
            By Chernoff's bound (\cref{chernoff}) with $\delta = \frac1{\log n}$, we obtain that 
                \begin{align*}
                    \Pr\left( |\mathcal{B}^k_1| \geq \left(1 + \frac1{\log n} \right) \Delta |B_{k-1}|\right) & \leq e^{- (1 - o(1)) \Delta |B_{k-1}|/(3 \log^2 n) }\\
                    &\leq e^{-(1 - o(1)) \Delta^3/(12\log ^2n)} \\
                    & \leq n^{-5}
                \end{align*}
            for $n$ large enough.

            \item $\mathcal{B}^k_2 = N(\{x \in A_{k-1} \mid N(x) \cap N^{k-1 }(u,v) \neq \emptyset \})$: This set is contained in the neighbourhood of non-isolated vertices in the graph $H := G[N^{k-1}(u,v)]$. Note that the vertex set $N^k(u,v)$ is defined independently of the distribution of edges in $H$. Therefore $H \sim G(|N^{k-1}(u,v)|,p)$. Let $N$ denote $|N^{k-1}(u,v)|$. By assumption we have $\Delta^{2}/4 \leq |B_{k-1}| \leq N \leq n/\Delta^2$ therefore $p = \frac{\Delta}{n} \leq \frac1{N\Delta}$. We can apply \cref{lem:isolatedgnp} to $H$ and deduce that $|\{x \in A_{k-1} \mid N(x) \cup N^{k-1 }(u,v) \neq \emptyset \}| \leq 4N/\Delta$ with probability at least 
            $$1 - 2e^{-N/3\Delta} \geq 1 - 2e^{-\Delta/12} \geq 1 - n^{-5}.$$
            Applying \cref{lem:maxmindegree} we obtain,
                $$\Pr \left( |\mathcal{B}^k_2| \geq 6|N^{k-1}(u,v)| \right) \leq n^{-5}.
                $$
            
            \item $\mathcal{B}^k_3 = \left\{x \in N^k(u,v) \middle| |N(x) \cap A_{k-1}| \geq 2\right\}$: Given a vertex $x \in N^k(u,v)$, consider $M_x$ the event: $|N(x) \cap A_{k-1}| \geq 2$. If $x \in N^k(u,v)$ has a neighbour in $B_{k-1}$ then $x$ has already been counted in $\mathcal{B}^k_1$. Therefore we can suppose $x$ has a neighbour in $A_{k-1}$. The probability for a such a $x$ to satisfy $M_x$ can be written as:
            \begin{align*}
            \Pr( M_x \mid |N(x) \cap A_{k-1}| \geq 1 ) &=  \Pr(|N(x) \cap A_{k-1}| \geq 2 \mid |N(x) \cap A_{k-1}| \geq 1 ) \\
            &= \Pr( \operatorname{Bin}(|A_{k-1}|-1,p) \geq 1) \\
            & \leq 1 - (1-p)^{|A_{k-1}|-1} \\
            & \leq 1 - \left(1 - \frac{\Delta}{n}\right)^{n/\Delta^2}\\
            & \leq 1/\Delta
            \end{align*}
        Note that the $M_x$ are mutually independent, therefore using Chernoff's bounds (\cref{chernoff}), over $|N^k(u,v)| \geq \Delta^2/4$ i.i.d variables with $\delta = 1$ and $\mu \leq |N^k(u,v)|/\Delta$, we deduce,
        $$\Pr \left( |\mathcal{B}^k_3| \geq 2|N^{k}(u,v)|/\Delta \right) \leq n^{-5}.$$
    \end{itemize}
    
    Summing and using a union bound on the three events described above we get that,

        $$\Pr\left(|B_{k}| \leq \left(1 + \frac1{\log n}\right) \Delta|B_{k-1}| + 6|N^{k-1}(u,v)| + \frac{2|N^k(u,v)|}{\Delta}\right) \geq 1 - 3n^{-5}.
        $$

        Let us rewrite this inequality to fit the statement of the theorem. 
        \begin{align*}
            |B_{k}| &\leq \left(1 + \frac1{\log n}\right) \Delta|B_{k-1}| +6|N^{k-1}(u,v)| + \frac{2|N^k(u,v)|}{\Delta}\\
            &\leq \left(1 + \frac1{\log n}\right) \left(1 + \frac{30}{\log n}\right)^{k-1} \frac\Delta2 |N^{k-1}(u,v)| + 6|N^{k-1}(u,v)| + \frac{2|N^k(u,v)|}{\Delta}\\
            &\leq \left(1 + \frac1{\log n} +  \frac{12}{\Delta} \right) \left(1 + \frac{30}{\log n}\right)^{k-1} \frac\Delta2 |N^{k-1}(u,v)| + \frac{2|N^k(u,v)|}{\Delta}\\
            & \leq \left[\left(1 + \frac{2}{\log n} + \frac{12}{\Delta}\right)\left(1 - \frac1{\log n}\right)^{-1} + \frac2\Delta \right] \left(1 + \frac{30}{\log n}\right)^{k-1} \frac12 |N^{k}(u,v)| \\
            & \leq \left( 1 + \frac{30}{\log n}\right) \left(1 + \frac{30}{\log n}\right)^{k-1} \frac12 |N^k(u,v)| \\
            & \leq \frac12 \left(1 + \frac{30}{\log n}\right)^{k} |N^{k}(u,v)|
        \end{align*}

        where we use the claim hypothesis to go from line 1 to 2. From line 3 to 4, we use the following fact, which can be derived from a  Chernoff's bound in the same way as the above upper bound on $N(B_{k-1})$:  $|N^k(u,v)| \geq (1 - \frac1{\log n}) \Delta |N^{k-1}(u,v)|$.
    \end{proof}

    Let us now discuss our bound on $B_k$ for the first few steps. The proof guideline is the same as \cref{cl:Bkissmall}.

    \initBksmall*

    \begin{proof}[Proof of \cref{cl:initBksmall}]
        
        What differs from \cref{cl:Bkissmall} is only that Chernoff's inequality yields a weaker concentration. But the statement is tailored to absorb this lost concentration into the error factor $(3 \beta + \frac1{100})$. 
        \begin{itemize}
            \item  $\mathcal{B}^k_1 = N(B_{k-1})$: We can consider $B_{k-1}$ to be non-empty (otherwise $N(B_{k-1}) = \emptyset$). Again, a Chernoff bound (\cref{chernoff}) with $\delta = 1$ and $\mu = (1 + o(1))\Delta|B_{k-1}|$ implies that,
            \begin{align*}
                \Pr\left(|\mathcal{B}^k_1| \geq 2 \Delta |B_{k-1}|\right) &\leq e^{(1 + o(1)) \Delta/3} \leq n^{-5}.
            \end{align*}
            Note that $|N^{k}(u,v)| \leq \frac32 |N^{k-1}(u,v)|$ by \cref{lem:maxmindegree} therefore,
            $$|\mathcal{B}^k_1| \leq 2 \Delta|B_{k-1}| \leq 2 \Delta \beta |N^{k-1}(u,v)| \leq 3 \beta |N^k(u,v)|.$$
            
            \item $\mathcal{B}^k_2 = N(\{x \in A_{k-1} \mid N(x) \cap N^{k-1}(u,v) \neq \emptyset \})$: Instead of directly bounding the size of $\mathcal{B}^k_2$ we use the fact that $|\mathcal{B}^k_2|$ is upper bounded by two times the number of edges in the graph $G[N^{k-1}(u,v)]$. Let us denote this random variable by $M :=|E(G[N^{k-1}(u,v)])|$ and $\mu = \Ex[M]$. Note that $\mu = O(p|N^{k-1}(u,v)|^2) \ll |N^{k-1}(u,v)|$ by assumption. Therefore we can fix $\delta_2 \geq 1$ to satisfy $(1 + \delta_2)\mu = \frac{1}{104}|N^ {k-1}(u,v)|$ and apply Chernoff's bounds
            \begin{align*}
                \Pr\left( |\mathcal{B}^k_2| \geq \frac{\Delta}{52} |N^{k-1}(u,v)|\right)  \leq \Pr\left( M \geq \frac{1}{104} |N^{k-1}(u,v)|\right) &\leq e^{-(1 + o(1))\delta_2 \mu/3} \\
                &\leq e^{-\frac{1+o(1)}{309}|N^1(u,v)|} \\
                &\leq n^{-5}.
            \end{align*}
        \item $\mathcal{B}^k_3 = \{x \in N^k(u,v) \mid |N(x) \cap A_{k-1}| \geq 2\}$. From the same reasoning as \cref{cl:Bkissmall} proof, we know that $\Pr(|N(x) \cap A_{k-1}| \geq 2 \mid x \in A_k) \leq 1/\Delta$. We can also use Chernoff's bounds (\cref{chernoff}). Fix $\delta_3 \geq 1$ which satisfy $(1+\delta_3)\Ex(|\mathcal{B}^k_2|) =  \frac{\Delta}{104}|N^1(u,v)| \leq \frac1{52}|N^2(u,v)|$. As for $\mathcal{B}^k_2$ we have,
        \begin{align*}
            \Pr(|\mathcal{B}^k_3| \geq  \frac{\Delta}{104}|N^1(u,v)| ) &\leq e^{-\Delta^2/309} \\
            &\leq n^{-5}.
        \end{align*}
        \end{itemize}
       
    Using a union bound on the three events described above, we obtain:

    \begin{align*}
    \Pr\left(|B_{k}| \leq 3 \beta |N_{k}(u,v)| + \frac{2|N^{k}(u,v)|}{52}\right) \geq 1 - 3n^{-5}.
    \end{align*}
    \end{proof}

Finally, we are only left to prove that $A_k$ contains indeed a large number of witnesses of the pair $u,v$.

\Akisgood*

\begin{proof}[Proof of \cref{cl:Akisgood}]
        We will prove that any vertex $x \in A_k$ such that $N(x) \cap N^k(u,v) = \emptyset$ is a witness of $u,v$. We already proved that most of the vertices satisfy this condition when we upper bounded $|\{x \in A_{k-1}| N(x) \cap N^{k-1}(u,v) \neq \emptyset\}|$ in \cref{cl:initBksmall}. 
        
        We prove by induction on $k$ that if $N(x) \cap N^k(u,v) = \emptyset$ then $x$ is a witness of $u,v$. It is true for $k = 1$ as $d(u,v) \geq 3$. Assume that the statement is true up to $k-1$. Consider the unique $y \in A_{k-1}$ such that $xy \in E(G)$. This must exist by construction of $A_{k}$. Note that the definition $\mathcal{B}^k_2$ ensures that if $x \in A_k$ then $y$ satisfies the induction hypothesis: $N(y) \cap N^{k-1}(u,v) = \emptyset$.  Therefore, we can consider up to symmetry that $d(y,u) = k-1$ and $d(y,v)\geq k+1$. Thus $d(x,u) = k$ and $d(y,v) \geq k+2$ as it unique neighbour in $N^{k-1}(u,v) \cup N^k(u,v)$ is $y$. Therefore, $x$ is a witness of $u,v$.
    \end{proof}
\end{proof}

\main*

\begin{proof}
    Recall that $\Delta = (n-1)p$. We will prove an upper bound on the expected query complexity of $O(\Delta^2 n \log n)$. Let $\overline{E} := {V(G) \choose 2} \setminus E(G)$ be the set of non-edges of $G$. The algorithm proceeds as follows. First, we consider a randomly sampled set $S$ of vertices where $|S| = 3\Delta^2 \log n$. We ask $\qu(V,S)$. We then deduce
    $$D = \{ \{u,v\} \in {V(G) \choose 2} \mid \exists s \in S, |d(s,u) - d(s,v)| \geq 2\},$$ and query all pairs in ${V(G) \choose 2} \setminus D$. Note that $D \subseteq \overline{E}$ by definition, therefore the correctness of this algorithm is direct.
    
    We now prove that ${[n] \choose 2} \setminus D = O(\Delta^2 n)$ with probability $1-o(n^{-1})$. Consider the set $D = \{uv \in {[n] \choose 2} \mid |W_{uv}| \geq n/\Delta^2\}$. \cref{lem:linearwitnesses} ensures us that $|D| = {n \choose 2} - O(\Delta^2n)$ with probability $1 - o(n^{-1})$. Assume that \cref{lem:linearwitnesses} holds deterministically. We compute the probability that a fixed pair $uv \in D$ has no witness in $S$.
    
    \begin{align*}
        \Pr(S \cap W_{uv} = \emptyset) & \leq  \left(1 - \frac{n}{n\Delta^2}\right)^{3 \Delta^2 \log (n)}\\
        & \leq e^{-3\log (n)}\\
        & \leq o(n^{-3})
    \end{align*}

    If we union bound over the events $S\cap W_{uv} \neq \emptyset$ for all pairs of vertices, we conclude that $|{[n] \choose 2} \setminus \Tilde{E}| = O(\Delta^2n)$ with probability $1 - o(n^{-1})$, in which case, our algorithm uses $n|S| + O(\Delta^2n) = O( \Delta^2 n \log n)$ queries. The worst-case query complexity of the algorithm being $O(n^{2})$ we obtain that the expected number of queries done by the algorithm is $(1 - o(n^{-1}))\Delta^2 n\log n + o(n^{-1}) n^2 = O(\Delta^2 n \log n).$
\end{proof}

\section{Acknowledgment}

The author extends their gratitude to Zoé Varin and Carla Groenland for the fruitful discussions on the topic. They also thank Julien Duron for his comments on the first version of this paper.

\bibliographystyle{alpha}
\newcommand{\etalchar}[1]{$^{#1}$}

\end{document}